\documentclass[letterpaper,12pt]{article}

%% if you use PostScript figures in your article
%% use the graphics package for simple commands
%% \usepackage{graphics}
%% or use the graphicx package for more complicated commands
%% \usepackage{graphicx}
%% or use the epsfig package if you prefer to use the old commands
%% \usepackage{epsfig}

%% The amssymb package provides various useful mathematical symbols
\usepackage{graphicx}
\usepackage{amssymb}
\usepackage{amsthm}
\usepackage{amsmath}
\usepackage{setspace}
\usepackage[top=1in,right=1.25in,left=1.25in,bottom=1in]{geometry}

\newtheorem{theorem}{Theorem}
\newtheorem{proposition}{Proposition}
\doublespacing

\begin{document}

\title{Diagnostic Tests for Non-causal Time Series with Infinite Variance}

\author{Yunwei Cui  \and Rongning Wu \and Thomas J. Fisher  \thanks{Corresponding author Yunwei Cui is an Assistant Professor in the Computer and Mathematical Sciences Department, University of Houston Downtown, Houston, TX 77002 (Phone: 713-226-5568; Fax: 713-221-8086; cuiy@uhd.edu); Rongning Wu is an Assistant Professor in the Zicklin School of Business at Baruch College, The City University of New York, New York, NY 10010 (rongning.wu@baruch.cuny.edu);Thomas Fisher is an Assistant Professor of Statistics at the University of Missouri-Kansas City, Kansas City, MO 64110 (fishertho@umkc.edu)}  }

\maketitle

\begin{abstract}
We study goodness-of-fit testing for non-causal autoregressive time series with non-Gaussian stable noise. To model time series exhibiting sharp spikes or occasional bursts of outlying observations, the exponent of the non-Gaussian stable variables is assumed to be less than two. Under such conditions, the innovation variables have no finite second moment. We proved that the sample autocorrelation functions of the trimmed residuals are asymptotically normal. Nonparametric tests are also investigated. The rank correlations 
of the residuals or the squared residuals are shown to be asymptotically normal.
Thus, an assortment of portmanteau statistics are available for model assessment.      

\end{abstract}

\noindent{\bf Keywords:} Non-causal AR Process; Infinite Variance; Goodness-of-fit; Portmanteau Test; $alpha$-stable distribution.

\noindent 2010 Mathematics Subject Classification: 62M10; 62P20.

%%
% \linenumbers

%% main text
\section{Introduction} \label{intro}

Infinite variance autoregressive (AR) time series models have various practical applications. For example, Resnick (1997) fitted such a model to interarrival times between packet transmissions on a computer network, Gallagher (2001) studied differenced sea surface temperatures and fitted a symmetric $\alpha$-stable AR model, and Ling (2005) examined the daily log-returns of the Hang Seng Index in the Hong Kong stock market. When modeling infinite variance autoregressive processes, non-Gaussian $\alpha$-stable distributions (i.e. the exponent parameter $\alpha<2$) are often adopted to specify the innovation process due to their intriguing mathematical properties. This rich class of probability distributions allows heavy tails and skewness, the features exhibited in many observed time series including signal processing in electrical engineering Stuck
and Kleiner (1974); Sheng and Chen (2011), portfolio selection Rachev et al. (2004), and asset allocation Tokat and Schwartz (2002).
So, the use of $\alpha$-stable AR models is well justified both theoretically and empirically.

When studying AR processes, causality (all roots of AR polynomial are outside the unit circle) is conventionally assumed. However, such an assumption is only needed when the study is carried out within the classical Gaussian framework in order to ensure the identifiability of model parameters. Indeed, for every non-causal Gaussian AR process there exists an equivalent causal representation in the sense that the two processes have the same mean and autocorrelation functions (see Brockwell and
Davis (1991)). Since a Gaussian distribution is uniquely determined by its first two moments, the two processes necessarily possess the identical probability structure and hence are indistinguishable. In contrast, under a non-Gaussian setting, a non-causal AR process will have a different probability structure than its causal representation. In other words, for a non-Gaussian AR process the model parameters are identifiable and the model can be configured uniquely without being confined to the causal case; see Breidt and Davis (1992) and Rosenblatt (2000).

In this work we consider diagnostic tests for non-Gaussian non-causal $\alpha$-stable AR processes. We remove the assumption of causality and refer to such processes as general AR processes. There has been a certain amount of work in the literature on general AR processes. For example, Breidt et al. (1991) discussed a maximum likelihood procedure for parameter estimation for autoregressive processes with non-Gaussian innovations. Andrews et al. (2009) studied maximum likelihood estimation for general AR processes with non-Gaussian $\alpha$-stable innovations. They showed that, when fitting trading volumes of the Wal-Mart stock, a general model yielded a better description of the observed data in the sense that the residuals are more compatible with the assumption of independent innovations than the residuals produced by its causal representation. Lanne et al. (2010) considered forecasting of the non-causal AR time series and demonstrated the improvements in the change-of-direction forecasts when relaxing causality in the AR model fitted to the US inflation series. Recently Andrews and Davis (2011) developed a procedure of model identification for infinite variance AR processes and showed that minimizing Gaussian-based AIC yields a consistent estimator of the AR order.

Compared to the devotions received to parameter estimation and model identification for infinite variance non-causal AR processes, model diagnostics have not been fully addressed so far. This work intends to fill the gap. Utilizing the recent results of  
Lee and Ng (2010) and Bouhaddioui and Ghoudi (2012) we develop portmanteau test procedures for checking the goodness-of-fit of the non-causal $\alpha$-stable AR model, where the model parameters are fit using maximum likelihood estimation. As second moments do not exist for infinite variance models, the behavior of the sample autocorrelation of the residuals from the fitted model is hard to harness for the purpose of model diagnostics. To circumvent the difficulty, we propose to use the trimmed residuals or nonparametric procedures based on the ranks of the residuals or the squared residuals. We show that the sample autocorrelation of trimmed residuals at a given lag for fitted general AR processes is asymptotically normal and hence the commonly used portmanteau tests in the classical Gaussian framework that are based on sample ACF, such as Box and Pierce (1970) and Ljung and Box (1978), can be easily extended to an infinite variance setting. We also proved that the rank correlations 
of the residuals or the squared residuals are asymptotically normal. Thus nonparametric tests 
could also be developed for model diagnostic purpose. 

The rest of the paper is organized as follows. In section 2, we introduce the necessary background material to derive the asymptotic distribution of trimmed residuals. We then discuss the use of nonparametric and propose nonparametric methods.Using the asymptotic properties we propose an assortment of portmanteau test based  on the classical methods and recent results. In section 3, we examine the finite sample performance of the proposed procedures through simulation studies. We check and compare the empirical sizes and powers of the tests.  All technical proofs are relegated to the Appendix.

\section{Theoretical Results}\label{sec:results}

\subsection{Preliminaries}\label{sec:preliminaries}
Let $\{Y_t\}$ be the autoregressive process satisfying the stochastic difference equation 
\begin{equation}\label{eq:ardef}
\phi(B)Y_{t}=Z_t,
\end{equation}
where the AR characteristic polynomial has no zeros on the units circle, $\phi(z):=1-\phi_1z-\cdots-\phi_p z^p\neq 0$ for $|z|=1$, and the i.i.d innovation variables $\{Z_t\}$ have a stable distribution with exponent $\alpha\in (0, 2)$. We also assume that the AR characteristic polynomial could be written as the product of causal and purely non-causal polynomials, 
\[
\phi(z)=(1-\theta_{1}z-\cdots-\theta_{r} z^r)(1-\theta_{r+1}z-\cdots-\theta_{r+s} z^s).
\]  
Then the unique strictly stationary solution to (\ref{eq:ardef}) is given by $Y_t=\displaystyle\sum\limits_{j=-\infty}^{\infty}\psi_jZ_{t-j}$, where $\psi_j$'s are determined by the Laurent series expansion for $1/\phi(z)$, $1/\phi(z)=\displaystyle\sum\limits_{j=-\infty}^{\infty}\psi_j z^j$. It is well known that the coefficients $\{\psi_j\}$ are geometrically decaying; namely there exist $C_1>0$ and $0<D_1<1$ such that $|\psi_j|<C_1D_1^{|j|}$ for all j. Now let $\bar{\psi}_j=\psi_{-j}$, for $j>0$.  We rewrite the solution to (\ref{eq:ardef}) as 
\begin{equation}
Y_t=\displaystyle\sum\limits_{j=0}^{\infty}\psi_jZ_{t-j}+\displaystyle\sum\limits_{j=1}^{\infty}\bar{\psi}_j Z_{t+j}.
\label{e:solu}
\end{equation}

For the AR model defined in (\ref{eq:ardef}), let $\hat{\phi}$ be the MLE estimator by Andrews et al. (2009). Then $n^{1/\alpha}(\phi-\hat{\phi})\stackrel{L}{\rightarrow}S$, where $S$ is some random variable. It could be shown that there exists $\delta$, satisfying $2\alpha/(2+\alpha)<\delta<\mathrm{min}(\alpha, 1)$, such that $n^{-1/\alpha}=o(n^{-1/\delta+1/2})$.

Suppose the observed time series is represented as $\{Y_{-p+1},\cdots ,Y_{0}, Y_{1}, \cdots, Y_{n}\}$. Then the residuals of the fitted model, $\{\hat{Z_{t}}\}_{t=1}^n$,  are  given by 
\begin{equation}
\hat{Z_t}=Y_t-\hat{\phi}_1Y_{t-1}-\cdots-\hat{\phi}_{p}Y_{t-p}. 
\label{eq:residual}
\end{equation}
Let $\{\hat{Z}_t\}_{t=1}^{n}$ be the residuals of the fitted model. For some predetermined lower percentile $\lambda^L$ and upper percentile $\lambda^U$, let
$\hat{M}_n^L$ and $\hat{M}_n^U$ be the $(n\lambda^L)$-th and $(n\lambda^U)$-th order statistics of $\{\hat{Z}_t\}_{t=1}^n$, respectively. We define the following trimmed residuals
\[
\hat{\tau}_t=\hat{Z}_t I_{(\hat{M}_n^L<\hat{Z}_t<\hat{M}_n^U)}.
\]

The goal is to test the hypotheses where the null ($H_0$) is that the ARMA model (\ref{eq:ardef}) with $s>0$ is adequately identified. For the trimmed residuals, the sample autocorrelation at lag $k$, $\hat{\rho}_k$, is computed by the formula
\begin{equation}\label{eq:trimmedACF}
\hat{\rho}_k=\frac{\left(\sum_{t=k+1}^n\hat{\tau}_t\hat{\tau}_{t-k}\right)
-\left(\sum_{t=k+1}^n\hat{\tau}_t\right)\left(\sum_{t=k+1}^n\hat{\tau}_{t-k}\right)/(n-k)}
{\left(\sum_{t=1}^{n}\hat{\tau}_t^2\right)-\left(\sum_{t=1}^n\hat{\tau}_t\right)^2/n} .
\end{equation}

\begin{theorem}\label{thm:acfDistro}
If the model (\ref{eq:ardef}) is correctly identified by the MLE method, then, for any positive integer $m$, we have
\[
\sqrt{n}\hat{\pmb{\rho}}_{(m)}\stackrel{D}{\rightarrow}N(0,\mathbf{I}_m),
\]
where $\hat{\pmb{\rho}}_{(m)}:=(\hat{\rho}_1, \ldots, \hat{\rho}_m)^{T}$ and $\mathbf{I}_m$ is the $m\times m$ identity matrix
\end{theorem}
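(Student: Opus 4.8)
The plan is to write $\hat\rho_k=\hat\gamma_k/\hat\gamma_0$, where, up to asymptotically negligible edge corrections, the numerator and denominator of (\ref{eq:trimmedACF}) are $n$ times the mean-corrected sample autocovariances $\hat\gamma_k$ and $\hat\gamma_0$ of the trimmed residuals. I would show that $\sqrt n(\hat\gamma_1,\dots,\hat\gamma_m)^T$ behaves asymptotically like the sample autocovariances of an i.i.d.\ finite-variance sequence, for which the limiting covariance is, by Bartlett's formula specialized to white noise, exactly the identity. The argument is a chain of three reductions whose final link is a classical central limit theorem.

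Let $q^L,q^U$ be the population $\lambda^L$- and $\lambda^U$-quantiles of $Z_t$ and set $\tilde\tau_t:=Z_tI_{(q^L<Z_t<q^U)}$. These are i.i.d.\ with bounded support, so $\sigma^2:=\mathrm{Var}(\tilde\tau_t)\in(0,\infty)$ and all moments are finite. The classical limit theorem for the sample autocorrelations of an i.i.d.\ sequence (Bartlett's formula with $\rho_k=0$ for $k\neq0$; see Brockwell and Davis 1991) then gives $\sqrt n(\hat\gamma^{(\tilde\tau)}_1,\dots,\hat\gamma^{(\tilde\tau)}_m)^T\stackrel{D}{\rightarrow}N(0,\sigma^4\mathbf I_m)$ together with $\hat\gamma^{(\tilde\tau)}_0\stackrel{P}{\rightarrow}\sigma^2$, whence $\sqrt n\,\hat{\pmb\rho}^{(\tilde\tau)}_{(m)}\stackrel{D}{\rightarrow}N(0,\mathbf I_m)$ by Slutsky. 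This is the final link; the remaining work shows that replacing $\tilde\tau_t$ by the actual trimmed residuals $\hat\tau_t$ perturbs each $\sqrt n\,\hat\gamma_k$, $k\geq1$, by only $o_p(1)$.

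First I would pass from population-quantile to sample-quantile trimming of the true innovations. A first-order expansion of the mean-corrected autocovariance in the thresholds has leading term proportional to $(\hat q-q)$ times $\frac1n\sum_t(\partial_q\tilde\tau_t)(\tilde\tau_{t-k}-\bar\tau)$; because $\tilde\tau_{t-k}$ is independent of $Z_t$ for $k\geq1$ and the factor $\tilde\tau_{t-k}-\bar\tau$ is centered, this term has mean zero and is $o_p(n^{-1/2})$, while $\hat q-q=O_p(n^{-1/2})$ by the usual $\sqrt n$-consistency of empirical quantiles. Thus sample-quantile trimming is asymptotically equivalent to population-quantile trimming for the lag-$k$ autocovariances, and the $O_p(n^{-1/2})$ effect on the lag-$0$ term is immaterial since $\hat\gamma_0\stackrel{P}{\rightarrow}\sigma^2>0$.

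Second, and this is the main obstacle, I would replace $Z_t$ by the residuals $\hat Z_t=Z_t+\sum_{j=1}^p(\phi_j-\hat\phi_j)Y_{t-j}$. Writing $\hat\tau_t=\tilde\tau_t+d_t$ and expanding $\sum_t\hat\tau_t\hat\tau_{t-k}$, it suffices that $\sum_t\tilde\tau_td_{t-k}$, $\sum_td_t\tilde\tau_{t-k}$ and $\sum_td_td_{t-k}$ are each $o_p(\sqrt n)$. The difficulty is that $\hat Z_t-Z_t$ is a linear combination of the heavy-tailed $Y_{t-j}$, which have infinite variance (and infinite mean when $\alpha<1$). The device is to fix $\delta$ with $2\alpha/(2+\alpha)<\delta<\min(\alpha,1)$: since $\delta<\alpha$ we have $E|Y_t|^\delta<\infty$, and since $\delta\leq1$ the $c_r$-inequality gives $\sum_t|\hat Z_t-Z_t|^\delta\leq\sum_{j=1}^p|\phi_j-\hat\phi_j|^\delta\sum_t|Y_{t-j}|^\delta=O_p(n^{-\delta/\alpha})\,O_p(n)=O_p(n^{1-\delta/\alpha})$, using $\phi_j-\hat\phi_j=O_p(n^{-1/\alpha})$ and the law of large numbers for $|Y_t|^\delta$. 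On indices where both $\hat Z_t$ and $Z_t$ are retained, $|d_t|$ is bounded by the (random but $O_p(1)$) trimming width, so that contribution to $\sum_t|d_t|$ is at most $O_p(1)\sum_t|\hat Z_t-Z_t|^\delta=O_p(n^{1-\delta/\alpha})=o_p(\sqrt n)$, the last equality holding precisely because $\delta>2\alpha/(2+\alpha)>\alpha/2$; the three cross-sums then follow from $\|\tilde\tau\|_\infty=O_p(1)$ and $\|d\|_\infty=O_p(1)$. The only delicate point left is the set of indices at which the retention indicator flips between $\hat Z_t$ and $Z_t$: these require $Z_t$ to lie within $\max_t|\hat Z_t-Z_t|=O_p(n^{1/\delta-1/\alpha})=o_p(n^{1/2})$ of a threshold, and I would control their number and their (again centered, hence mean-zero at lag $k\geq1$) contribution using the same moment bound together with the rate relation $n^{-1/\alpha}=o(n^{-1/\delta+1/2})$. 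Chaining the three reductions and applying Slutsky to $\hat\rho_k=\hat\gamma_k/\hat\gamma_0$ then yields the theorem.
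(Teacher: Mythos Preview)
Your strategy—reduce to the i.i.d.\ population-quantile-trimmed sequence via a chain of approximations and then apply Bartlett/Slutsky—is exactly the paper's strategy, which follows Lee and Ng (2010). Your treatment of the non-flipping indices via $|d_t|\le C^{1-\delta}|\hat Z_t-Z_t|^\delta$ and $\sum_t|\hat Z_t-Z_t|^\delta=O_p(n^{1-\delta/\alpha})=o_p(\sqrt n)$ is clean and essentially what the paper's Proposition~1 provides.

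The gap is at the flipping indices, and this is precisely where the paper does the real work. Your proposal to control them because ``$Z_t$ must lie within $\max_t|\hat Z_t-Z_t|=O_p(n^{1/\delta-1/\alpha})$ of a threshold'' does not deliver what you need. First, the sharper direct bound is $\max_t|\varphi_t|\le\|\phi-\hat\phi\|\cdot\max_{t,j}|Y_{t-j}|=O_p(n^{-1/\alpha})\cdot O_p(n^{1/\alpha})=O_p(1)$, and the number of $t$ with $Z_t$ within an $O_p(1)$ window of a quantile is $O_p(n)$, not $o_p(\sqrt n)$. A uniform-in-$t$ bound on $|\varphi_t|$ is simply too crude here. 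What the paper does instead (its Proposition~2, extending Lee--Ng's Proposition~5.3 to the non-causal setting) is bound each individual probability $P(|\varphi_t|>|\chi_t|,\,A_n,\,|Z_{s[n\lambda]}|<\gamma_3)$ by a Markov-type term $E[|\varphi_t|^\delta|\chi_t|^{-\delta}\mathbf 1_{A_n}\mid t\neq s([n\lambda])]$ and then condition carefully on whether $s([n\lambda])$ coincides with $t-j$ (or $t+j$) or not; this case split is what allows one to invoke either the bound $|Z_{s[n\lambda]}|<\gamma_3$ or independence on each piece, and each piece is shown to be $o(n^{-1/2})$ using the rate relation $n^{-1/\alpha}=o(n^{-1/\delta+1/2})$. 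In the non-causal case this conditioning is genuinely more delicate than in Lee--Ng, because $\varphi_t=\sum_j(\phi_j-\hat\phi_j)Y_{t-j}$ now depends on $Z_t$ itself (through the $\bar\psi_j Z_t$ terms inside $Y_{t-j}$) and on $Z_{t+1},Z_{t+2},\ldots$; the paper's bound (\ref{e:bdd}) and the case analysis in Proposition~2 are designed to absorb exactly this two-sided dependence. Your fallback to ``centered, hence mean-zero at lag $k\ge1$'' does not rescue the argument either, since the flipping set and the thresholds $\hat M_n^L,\hat M_n^U$ are functions of the whole sample, so the relevant conditional means are not zero.

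A smaller point: your first reduction via a ``first-order expansion in the thresholds'' is heuristic because $z\mapsto zI_{(q^L<z<q^U)}$ is not differentiable at the thresholds; the paper simply cites Lee--Ng's Lemma~4.1 for this step, which holds for arbitrary i.i.d.\ innovations and does not need to be reproved here.
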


The sample partial autocorrelation (PACF) at lag $k$, $\hat{\pi}_k$, can be derived by Durbin--Levison algorithm:
\begin{equation}
\hat{\pi}_k=\frac{\hat{\rho}_k-\hat{\pmb{\rho}}_{(k-1)}^T \mathbf{R}_{(k-1)}^{-1}\hat{\pmb{\rho}}_{(k-1)}^{*}}
{1-\hat{\pmb{\pmb{\rho}}}_{(k-1)}^T \mathbf{R}_{(k-1)}^{-1}\hat{\pmb{\rho}}_{(k-1)}} , \label{e:par}
\end{equation}
where $\hat{\pmb{\rho}}_{(k-1)}=(\hat{\rho}_1, \ldots, \hat{\rho}_{k-1})^{T}$,
$\mathbf{R}_{(k-1)}=(\hat{\rho}_{|i-j|})_{i,j=1}^k$ (i.e. the symmetric Toeplitz matrix generated by $(1, \hat{\rho}_1,
\ldots, \hat{\rho}_{k-1})$), and $\hat{\pmb{\rho}}_{(k-1)}^{*}=(\hat{\rho}_{k-1}, \ldots, \hat{\rho}_1)^{T}$.

\begin{theorem}\label{thm:pacfDistro}
If the model (\ref{eq:ardef}) is correctly identified by the MLE method, then, for any positive integer $m$, we have
\[
\sqrt{n}\hat{\pmb{\pi}}_{(m)}\stackrel{D}{\rightarrow}N(0,\mathbf{I}_m),
\]
where $\hat{\pmb{\pi}}_{(m)}:=(\hat{\pi}_1, \ldots, \hat{\pi}_m)^{T}$ and $\mathbf{I}_m$ is the $m\times m$ identity matrix
\end{theorem}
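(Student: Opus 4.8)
The strategy is to leverage Theorem 1 directly, treating the PACF as a smooth function of the ACF vector near the null. The starting point is the observation that under $H_0$ we already know $\sqrt{n}\,\hat{\pmb{\rho}}_{(m)} \stackrel{D}{\to} N(0,\mathbf{I}_m)$, so in particular each $\hat{\rho}_k = O_p(n^{-1/2})$ and converges in probability to $0$. I would first set up the Durbin--Levinson recursion (\ref{e:par}) and examine the order of magnitude of each piece as $\hat{\pmb{\rho}}_{(k-1)} \to \mathbf{0}$.

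The key algebraic observation is that in formula (\ref{e:par}) the quadratic correction terms are negligible. First I would note that $\mathbf{R}_{(k-1)} \stackrel{P}{\to} \mathbf{I}_{k-1}$, since its off-diagonal entries are the $\hat{\rho}_{|i-j|} \stackrel{P}{\to} 0$ while its diagonal is identically $1$; hence $\mathbf{R}_{(k-1)}^{-1} \stackrel{P}{\to} \mathbf{I}_{k-1}$ as well. Then the bilinear form $\hat{\pmb{\rho}}_{(k-1)}^T \mathbf{R}_{(k-1)}^{-1}\hat{\pmb{\rho}}_{(k-1)}^{*}$ in the numerator is a sum of products of two distinct ACF entries, each of order $O_p(n^{-1/2})$, so the whole quadratic form is $O_p(n^{-1})=o_p(n^{-1/2})$; the same holds for the denominator's correction $\hat{\pmb{\rho}}_{(k-1)}^T \mathbf{R}_{(k-1)}^{-1}\hat{\pmb{\rho}}_{(k-1)}$. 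Consequently the denominator of (\ref{e:par}) converges in probability to $1$, and I would write
\[
\hat{\pi}_k = \frac{\hat{\rho}_k - o_p(n^{-1/2})}{1 - o_p(n^{-1/2})} = \hat{\rho}_k + o_p(n^{-1/2}).
\]
Stacking these over $k=1,\ldots,m$ gives $\sqrt{n}\,\hat{\pmb{\pi}}_{(m)} = \sqrt{n}\,\hat{\pmb{\rho}}_{(m)} + o_p(1)$.

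The conclusion then follows from Theorem 1 together with Slutsky's theorem: since $\sqrt{n}\,\hat{\pmb{\rho}}_{(m)} \stackrel{D}{\to} N(0,\mathbf{I}_m)$ and the difference $\sqrt{n}(\hat{\pmb{\pi}}_{(m)}-\hat{\pmb{\rho}}_{(m)})$ is $o_p(1)$, the two vectors share the same limiting distribution. A cleaner packaging of the same idea is to recognize the map $\hat{\pmb{\rho}}_{(m)} \mapsto \hat{\pmb{\pi}}_{(m)}$ as a differentiable transformation whose Jacobian at the null point $\hat{\pmb{\rho}}_{(m)}=\mathbf{0}$ is the identity matrix (this is the classical fact that, evaluated at zero autocorrelation, the Durbin--Levinson map linearizes to the identity), and then invoke the delta method; the asymptotic covariance transforms as $\mathbf{I}_m \cdot \mathbf{I}_m \cdot \mathbf{I}_m^T = \mathbf{I}_m$, yielding the stated result.

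I expect the only real obstacle to be bookkeeping the joint orders of magnitude carefully enough to justify that \emph{all} the correction terms, across every lag $k \le m$ simultaneously, are uniformly $o_p(n^{-1/2})$ rather than merely individually negligible; this requires the joint tightness of $\sqrt{n}\,\hat{\pmb{\rho}}_{(m)}$, which is exactly what Theorem 1 supplies. Once that is in hand, the argument is essentially the standard one establishing that sample PACF and sample ACF are asymptotically equivalent (and both asymptotically white) under the null, now transplanted to the infinite-variance trimmed-residual setting.
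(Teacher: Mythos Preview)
Your proposal is correct and is essentially the same approach as the paper's, which simply reads ``By Theorem~\ref{thm:acfDistro} and equation~(\ref{e:par}).'' You have supplied exactly the missing details behind that one-line proof: the Durbin--Levinson corrections are $O_p(n^{-1})$ quadratic forms in the $\hat\rho_k$'s, so $\sqrt{n}\,\hat{\pmb{\pi}}_{(m)}=\sqrt{n}\,\hat{\pmb{\rho}}_{(m)}+o_p(1)$ and Slutsky finishes.
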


Nonparametric portmanteau tests could also be developed. The following 
result provides the foundation for nonparametric tests based 
on the empirical process of the residuals or the squared residuals.  
Let $\tilde{r}_{j}=\sum_{i=1}^{n}\mathrm{I}\{\hat{Z}_i\leq \hat{Z}_j\}/n$ 
be the normalized rank of $\hat{Z}_j$  and define the rank correlation as $\hat{\gamma}_i=\sum_{t=1}^{n-i}(\tilde{r}_{t}-1/2)(\tilde{r}_{t+i}-1/2)$.
We can also define the rank correlations for the squared residuals, $\hat{\gamma}_i^{*}$,
in the same fashion.

\begin{theorem}\label{thm:rankcor}
If the model (\ref{eq:ardef}) is correctly identified 
by MLE method, then, for any positive integer $m$, we have
\begin{eqnarray*}
12\sqrt{n}\hat{\pmb{\gamma}}_{(m)}\stackrel{D}{\rightarrow}N(0,\mathbf{I}_m)\\
12\sqrt{n}\hat{\pmb{\gamma}}^{*}_{(m)}\stackrel{D}{\rightarrow}N(0,\mathbf{I}_m)
\end{eqnarray*}
where $\hat{\pmb{\gamma}}_{(m)}:=(\hat{\gamma}_1, \ldots, \hat{\gamma}_m)^{T}$,  
$\hat{\pmb{\gamma}}_{(m)}^{*}:=(\hat{\gamma}_1^{*}, \ldots, \hat{\gamma}_m^{*})^{T}$
and $\mathbf{I}_m$ is the $m\times m$ identity matrix.
\end{theorem}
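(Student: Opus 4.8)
The plan is to reduce the statement to the corresponding result for the true innovation sequence $\{Z_t\}$, which is i.i.d., invoke the asymptotic normality of rank autocorrelations available from Lee and Ng (2010) and Bouhaddioui and Ghoudi (2012) for such sequences, and then to show that replacing each $Z_t$ by its residual $\hat Z_t$ perturbs the rank statistics by an amount negligible at the $\sqrt n$ scale. The essential structural gain is that rank statistics depend on the sample only through the ordering of the observations, so the normalized ranks $\tilde r_j$ lie in $[0,1]$ irrespective of the infinite variance of $\{Z_t\}$; the heavy tails that defeat moment-based arguments therefore never enter directly.

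I would first record the i.i.d.\ benchmark. Let $F$ denote the common continuous marginal law of the innovations and write $r_j=\sum_i \mathrm I\{Z_i\le Z_j\}/n$ for the ranks of the true innovations. The H\'ajek projection of $r_j$ identifies $r_t-1/2$ asymptotically with $F(Z_t)-1/2$, a centered $\mathrm{Uniform}(0,1)$ deviate of variance $1/12$. For each fixed lag $i\ge1$ the products $(F(Z_t)-1/2)(F(Z_{t+i})-1/2)$ form a stationary, mean-zero, finitely dependent sequence that is uncorrelated across distinct lags, so a martingale/mixing central limit theorem delivers joint asymptotic normality of the corresponding rank autocorrelations with a diagonal limiting covariance. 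Each product has variance $(1/12)^2=1/144$, which is exactly why the scaling $12\sqrt n$ produces the identity matrix $\mathbf I_m$ in the limit.

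The core of the argument, and the step I expect to be hardest, is the transfer from $Z_t$ to the residual $\hat Z_t$. With $\mathbf Y_{t-1}=(Y_{t-1},\ldots,Y_{t-p})^{T}$ we have $\hat Z_t-Z_t=(\phi-\hat\phi)^{T}\mathbf Y_{t-1}$, and although $\phi-\hat\phi=O_p(n^{-1/\alpha})$ this displacement is \emph{not} uniformly small: the heavy tails make the running maximum of the $Y_t$ grow like $n^{1/\alpha}$, so some residuals are shifted by $O_p(1)$ and a supremum bound on $|\hat Z_t-Z_t|$ is unavailable. The remedy is to measure the perturbation in a $\delta$-th mean rather than uniformly. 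Since an $\alpha$-stable law has finite absolute moments of every order below $\alpha$, choosing $\delta<\min(\alpha,1)$ gives $E|Y_t|^{\delta}<\infty$ and hence $\sum_t|\hat Z_t-Z_t|^{\delta}=O_p(n^{1-\delta/\alpha})$. Inserting this into a H\'ajek-type linearization of the rank-correlation functional and exploiting the smoothness of $F$, the resulting change in $12\sqrt n\,\hat{\pmb{\gamma}}_{(m)}$ is shown to be $o_p(1)$ exactly when $n^{-1/\alpha}=o(n^{-1/\delta+1/2})$; the admissible range $2\alpha/(2+\alpha)<\delta<\min(\alpha,1)$ is precisely what makes a workable $\delta$ available. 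A stochastic equicontinuity bound for the functional then transfers the limit law from the innovation statistics to the residual statistics.

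The statement for the squared residuals follows the same route with $\hat Z_t$ replaced by $\hat Z_t^2$. Because $x\mapsto x^2$ is strictly increasing on $[0,\infty)$, the ranks of $\{\hat Z_t^2\}$ coincide with the ranks of $\{|\hat Z_t|\}$, and $\{|Z_t|\}$ is again i.i.d.\ with a continuous marginal law; the benchmark of the second paragraph and the residual-to-innovation reduction of the third apply verbatim, giving $12\sqrt n\,\hat{\pmb{\gamma}}^{*}_{(m)}\stackrel{D}{\rightarrow}N(0,\mathbf I_m)$. Combining the two pieces via Slutsky's theorem completes the proof.
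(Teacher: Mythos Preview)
Your outline is plausible but follows a genuinely different route from the paper. The paper does \emph{not} separate the argument into an i.i.d.\ benchmark plus a residual-to-innovation transfer carried out via $\delta$-moment bounds. Instead it works directly with the empirical copula $C_{m,n}$ of the residual sequence and invokes Theorem~3.4 (and surrounding lemmas) of Bouhaddioui and Ghoudi~(2012), which already establishes weak convergence of the residual empirical copula for infinite-variance AR models; the only adaptation to the non-causal setting is to check the tail condition via Cline and Brockwell~(1985). The rank correlations are then obtained as integrals of the M\"obius transforms $\mathcal{M}_A(C_{m,n})$ in the sense of Genest and R\'emillard~(2004), and the asymptotic independence across lags (hence the identity covariance) drops out of the known independence of $\mathcal{M}_A(\tilde{\mathcal C})$ across distinct index sets $A$. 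The squared-residual case is dispatched by the same citation.

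What each approach buys: the paper's route is essentially a verification-and-cite argument---once the Cline--Brockwell tail estimate is in hand, everything is delegated to Bouhaddioui--Ghoudi and Genest--R\'emillard, and the $1/144$ variance and cross-lag independence come for free from the M\"obius structure. Your route is more self-contained and conceptually transparent (the H\'ajek projection makes the $1/12$ explicit), but the step you label ``stochastic equicontinuity bound'' is exactly where the hard analysis lives: ranks are non-smooth functionals, and showing that $\sum_t|\hat Z_t-Z_t|^{\delta}=O_p(n^{1-\delta/\alpha})$ forces $o_p(n^{-1/2})$ perturbation of the rank autocorrelations requires an empirical-process oscillation bound that you have not supplied. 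That bound is essentially what Bouhaddioui--Ghoudi prove, so your argument would in the end have to reproduce a fair portion of their work rather than simply cite it.
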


\hfill \fbox \hfill

\subsection{Goodness-of-fit testing}

The results of Theorems \ref{thm:acfDistro}, \ref{thm:pacfDistro} and \ref{thm:rankcor} allow for the construction of the so called Portmanteau Statistics for time series goodness-of-fit. A Box and Pierce (1970) or Ljung
and Box (1978) type statistic can be constructed, consider:
\begin{equation}\label{eq:LjungBox}
Q_{\ell b}(m) = n(n+2)\sum_{k=1}^m \frac{\hat{\rho}_k}{n-k}.
\end{equation}
Under the null hypothesis, the Ljung Box type statistic will behave as a chi-square random variable with $m$ degrees of freedom.

A statistic inspired by Monti (1994) can be constructed utilizing the partial autocorrelation function of trimmed residuals and Theorem \ref{thm:pacfDistro},
\begin{equation}\label{eq:Monti}
Q_{mt}(m)=n(n+2)\displaystyle\sum\limits_{k=1}^{m}\frac{\hat{\pi}_k^2}{n-k}
\end{equation}
will be asymptotically distributed as a chi-square random variable with $m$ degrees of freedom for a given positive integer $m$. 

Recent work in the literature has suggested asymmetric statistics may be more powerful in some situations than the symmetric (i.e. equally weighted) Ljung Box and Monti type statistics. Define $\hat{R}_m$ as the Toeplitz matrix of autocorrelations:
\[
\hat{R}_m= \begin{bmatrix}
       1           & \hat{\rho}_1 & \ldots &   \hat{\rho}_m         \\%[0.3em]
       \hat{\rho}_1& 1            & \ldots &   \hat{\rho}_{m-1}     \\%[0.3em]
       \vdots      & \vdots       & \ddots &   \vdots     \\%[0.3em]
     \hat{\rho}_m          & \hat{\rho}_{m-1} & \ldots &1
     \end{bmatrix}.
\]
Pe{\~n}a and Rodr{\'{\i}}guez (2002)
suggest a statistics based on the likelihood ratio test from multivariate analysis. Their statistic is $\hat{D}=n(1-|\hat{R}_m|^{1/m})$. Utilizing the asymptotic normality from Theorem \ref{thm:pacfDistro} and an application of the delta-method, the asymptotic distribution under the null hypothesis can be shown to satisfy
\begin{equation}\label{eq:weightDistro}
\hat{D}\stackrel{D}{\rightarrow}\displaystyle\sum\limits_{k=1}^{m}\frac{m-k+1}{m}\chi_k^{2}
\end{equation}
where each $\chi_k^2$ is a chi-square random variable with one degree of freedom. This distribution is difficult to write explicitly but can be well approximated by a Gamma distribution; see Pe{\~n}a and Rodr{\'{\i}}guez (2002) for details.

In Pe{\~n}a and Rodr{\'{\i}}guez (2006) they suggest the sum of the log of one minus the squared partial autocorrelation function. Utilizing Theorem 2, that statistic can also be shown to satisfy (\ref{eq:weightDistro}). Mahdi and McLeod (2012) generalize the result of Pe{\~n}a and Rodr{\'{\i}}guez (2002, 2006) to the multivariate time series setting. In the univariate case their statistic is
\begin{equation}\label{eq:MahdiMcLeod}
 Q_{gv}(m) = \frac{-3n}{2m+1}\log |\hat{R}_m|
\end{equation}
and the distribution follows a result similar to (\ref{eq:weightDistro}) and can be approximated with a chi-square with $(3/2)m(m + 1)/(2m + 1)$ degrees of freedom.

Recently, Fisher and Gallagher (2012) suggest an alternative asymmetric test compared to those based on the determinant of the matrix $\hat{R}_m$. They suggest a Weighted Ljung Box
\begin{equation}\label{eq:FisherGallagher}
Q_{w\ell}(m) = n(n+2)\sum_{k=1}^m \frac{m-k+1}{m} \frac{\hat{\rho}_k^2}{n-k}
\end{equation}
which is shown to satisfy the distribution in (\ref{eq:weightDistro}) and can be well approximated by a Gamma random variable with shape $\alpha = 3m(m+1)/(8m+4)$ and scale $\beta = 2(2m + 1)/3m$. Likewise, a Weighted Monti statistic is also introduced that follows the same asymptotic distribution under the null hypothesis.

\section{Simulation Studies}\label{sec:simulations}

Computation on $\alpha$-stable distributions has been well studied and is known to be computationally difficult. Our studies were performed in the GNU-licensed R-Project utilizing the stable distribution in the \verb*|stabledist| package with parameterization method zero. Due to the computational intensity in optimizing the likelihood function in Andrews et al. (2009), much of our studies were run in a parallel framework utilizing the \verb*|multicore| package. Similar to Andrews et al. (2009), when optimizing the likelihood function we generate 1200 random initial conditions; the likelihood function is found for each, and then the Nelder-Mead optimization routine is run on the best eight. The parameters for maximum likelihood function of those eight is chosen as the MLE for the general AR process with $\alpha$-stable innovations. Since the maximum likelihood function is found we can easily calculate the model identification criterion from Andrews and Davis (2011) as well.

In our studies we compare the Ljung Box type statistic in (\ref{eq:LjungBox}), the Monti type in (\ref{eq:Monti}), the Mahdi McLeod type in (\ref{eq:MahdiMcLeod}), the corresponding Weighted version of Box-Pierce $Q_{wb}$, Monti bype $Q_{wm}$, and Ljung Box test $Q_{w\ell}$ in (\ref{eq:FisherGallagher}) and the nonparametric test $Q_{rk}=144n\sum_{k=1}^m \hat{\gamma}^2_k$ for the residuals and 
$Q_{rks}=144n\sum_{k=1}^m (\hat{\gamma}^*_k)^2$ for the squared residuals.
The Mahdi McLeod was chosen over the suggestions in Pe{\~n}a and Rodr{\'{\i}}guez (2002, 2006) since it is numerically stable (see Lin and McLeod (2006)), has conservative Type I error performance and is implemented in the \verb*|portes| package. The statistics from Fisher and Gallagher (2012) are available in the \verb*|WeightedPortTest| package and include unweighted versions as well; i.e. the traditional Ljung Box and Monti types. When trimming the residuals, we truncate at the first and 99th percentiles. 

\begin{table}[tbp]
\caption{\small{Empirical sizes of non-causal AR(2) model with s=1, n=500}} \vspace{9pt}
\centering %
\begin{tabular}{cc|ccccc|ccc}
\hline 
\multicolumn{2}{c|}{AR(2)} & 
\multicolumn{5}{c|}{$$} & 
\multicolumn{3}{c}{$$}\tabularnewline 
$\phi_1=2.8$ & $\phi_2=-1.6$ & $Q_{\ell b}$ & $Q_{mt}$ & $Q_{rk}$ & $Q_{gv}$ & $Q_{rks}$ &$Q_{w b}$ & $Q_{w\ell}$ & $Q_{wm}$\tabularnewline 
\hline %
\hline %

$\alpha=1.8$& 5&0.042	&0.045	&0.030	&0.020 &0.051	&0.034	&0.036	&0.037\tabularnewline %
$\beta=0$& 10&0.050	&0.049	&0.035	&0.020	& 0.034 &0.035	&0.039	&0.040\tabularnewline %
$\gamma=1$& 15&0.045	&0.049	&0.035	&0.018	&0.039  &0.046	&0.050	&0.046\tabularnewline %
$\delta=0$& 20&0.043	&0.045	&0.034	&0.017	&0.039  &0.049	&0.056	&0.049\tabularnewline %
& 25&0.042	&0.054	&0.038	&0.014	&0.047	&0.039 &0.052	&0.048\tabularnewline \hline%

$\alpha=1.5$& 5&0.032	&0.029	&0.030	&0.031	&0.043  &0.030	&0.031	&0.034\tabularnewline %
$\beta=0$& 10&0.044	&0.046	&0.032	&0.023	&0.041 &0.032	&0.034	&0.036\tabularnewline %
$\gamma=1$& 15&0.045	&0.044	&0.024	&0.020	&0.038  &0.036	&0.039	&0.042\tabularnewline %
$\delta=0$& 20&0.048	&0.048	&0.039	&0.016	&0.041 &0.038	&0.042	&0.042\tabularnewline %
& 25&0.039	&0.046	&0.037	&0.015	&0.046 &0.034	&0.043	&0.047\tabularnewline \hline%

$\alpha=1.2$& 5 &0.059	&0.062	&0.059	&0.037 &0.059	&0.052	&0.050	&0.053\tabularnewline %
$\beta=0$ & 10&0.060	&0.056	&0.062	&0.029 & 0.062	&0.057	&0.061	&0.056\tabularnewline %
$\gamma=1$ & 15 &0.056	&0.055	&0.059	&0.025	& 0.052 &0.057	&0.062	&0.058\tabularnewline %
$\delta=0$& 20 &0.049	&0.053	&0.067	&0.021	&0.057  &0.057	&0.060	&0.056\tabularnewline %
    & 25&0.050	&0.062	&0.061	&0.018	&0.059 &0.054	&0.059	&0.059\tabularnewline  \hline%

$\alpha=0.8$& 5&0.088	&0.083	&0.073	&0.068	&0.087  &0.084	&0.087	&0.086\tabularnewline %
$\beta=0$ & 10 &0.078	&0.081	&0.085	&0.053	&0.069 &0.088	&0.091	&0.088\tabularnewline %
$\gamma=1$ &15 &0.081	&0.084	&0.078	&0.047	&0.071 &0.087	&0.091	&0.092\tabularnewline %
$\delta=0$& 20 &0.086	&0.080	&0.074	&0.048	&0.068  &0.079	&0.085	&0.084\tabularnewline %
    & 25&0.083	&0.078	&0.071	&0.046	      &0.063 &0.074	&0.082	&0.080\tabularnewline  \hline
    
$\alpha=1.8$&5  &0.045	&0.045	&0.038	&0.022	& 0.041&0.030	&0.030	&0.029\tabularnewline %
$\beta=0.5$ &10 &0.046	&0.047	&0.034	&0.017	& 0.039 &0.043	&0.044	&0.044\tabularnewline %
$\gamma=1$  &15 &0.042	&0.040	&0.040	&0.019	& 0.040 &0.041	&0.043	&0.042\tabularnewline %
$\delta=0$  &20 &0.046	&0.039	&0.037	&0.014	& 0.046 &0.037	&0.042	&0.041\tabularnewline %
            &25 &0.045	&0.044	&0.035	&0.015	& 0.044   &0.033	&0.040	&0.039\tabularnewline \hline

$\alpha=1.5$&5 &0.048 &0.043 &0.037  &0.036 &0.044 &	0.039&	0.041&	0.042\tabularnewline %
$\beta=0.5$ &10 &0.038 &0.043 &0.031 &0.032 &0.033  &	0.034&	0.038&	0.037\tabularnewline %
$\gamma=1$  &15 &0.042 &0.040 &0.042 &0.034 &0.035  &	0.038&	0.042&	0.034\tabularnewline %
$\delta=0$  &20 &0.036 &0.044 &0.035 &0.034 & 0.034 &	0.035&	0.036&	0.036\tabularnewline %
            &25&0.035&0.033&	0.030  &0.034 &0.036  & 0.031&	0.037&	0.036\tabularnewline \hline
$\alpha=1.2$&5 &0.066	&0.063	&0.047	&0.054&0.069	&0.062	&0.064	&0.060\tabularnewline %
$\beta=0.5$ &10&0.061	&0.066	&0.045	&0.066&0.064	&0.062	&0.064	&0.061\tabularnewline %
$\gamma=1$  &15&0.063	&0.070	&0.040	&0.070&0.064	&0.064	&0.069	&0.063\tabularnewline %
$\delta=0$  &20&0.06	&0.075	&0.053	&0.075&0.062	&0.065	&0.067	&0.065\tabularnewline %
            &25&0.06	&0.061	&0.054	&0.076&0.056	&0.065	&0.068	&0.064\tabularnewline \hline
$\alpha=0.8$&5&0.099	&0.104	&0.098	&0.043	&0.094 &0.099	&0.100	&0.102\tabularnewline %
$\beta=0.5$&10&0.114	&0.113	&0.091	&0.047	&0.083  &0.115	&0.117	&0.115\tabularnewline %
$\gamma=1$&15&0.115	&0.110	&0.098	&0.053	&0.079 &0.104	&0.109	&0.115\tabularnewline %
$\delta=0$ &20&0.105	&0.102	&0.094	&0.059	& 0.073 &0.103	&0.110	&0.109\tabularnewline %
	&25&0.108&0.115	&0.087	&0.060	&0.071  &0.095	&0.105	&0.109\tabularnewline \hline             
\end{tabular}
\label{sizeempone}
\end{table}

We check the finite sample sizes and powers of the proposed tests for different AR(1)
and AR(2) models. For each selected model the simulation is run $1000$ times. The 
results are summarized in Table $1$ through Table $3$. 
Overall, these tests perform well when $\alpha\ge 1.5$. No test dominate the performance.  
As $\alpha$ decreases the 
empirical sizes increase. But since in practice the $\alpha$ for the fitted model 
is always above $1.5$, the problem does not cause big concerns to us. 

%n50ar(1)power
\begin{table}[tbp]
\caption{\small{Empirical powers for non-causal AR(2) model fitted as non-causal AR(1), n=50}} \vspace{9pt}
\centering %
\begin{tabular}{cc|ccccc}
\hline 
\multicolumn{2}{c|}{AR(2)} & 
\multicolumn{5}{c}{} \tabularnewline 
$\phi_1=-1.2$ & $\phi_2=1.6$ & $Q_{\ell b}$ & $Q_{mt}$ & $Q_{gv}$& $Q_{w b}$ & $Q_{rk}$  \tabularnewline 
\hline %
\hline %

$\alpha=0.8$ &5&0.616	&0.541	&0.67	&0.756	&0.915	\tabularnewline %
$\beta=0$   &10&0.483	&0.325	&0.523	&0.629	&0.833	\tabularnewline %
$\gamma=1$  &15&0.397	&0.246	&0.420	&0.574	&0.752	\tabularnewline %
$\delta=0$  &20&0.373	&0.167	&0.326	&0.538	&0.693	\tabularnewline %
            &25&0.356	&0.135	&0.258	&0.504	&0.623	\tabularnewline \hline%

$\alpha=1.2$ &5&0.533	&0.454	&0.550	&0.633	&0.684	\tabularnewline %
$\beta=0$   &10&0.425	&0.311	&0.446	&0.556	&0.550	\tabularnewline %
$\gamma=1$  &15&0.381	&0.219	&0.349	&0.509	&0.462	\tabularnewline %
$\delta=0$  &20&0.376	&0.167	&0.276	&0.487	&0.404	\tabularnewline %
            &25&0.349	&0.117	&0.204	&0.460	&0.360	\tabularnewline \hline%
            
$\alpha=1.5$ &5&0.426	&0.358	&0.428	&0.497	&0.456	\tabularnewline %
$\beta=0$   &10&0.323	&0.241	&0.335	&0.447	&0.343	\tabularnewline %
$\gamma=1$  &15&0.302	&0.174	&0.262	&0.407	&0.264	\tabularnewline %
$\delta=0$  &20&0.284	&0.138	&0.192	&0.379	&0.227	\tabularnewline %
            &25&0.311	&0.107	&0.145	&0.361	&0.178	\tabularnewline \hline%  
            
$\alpha=1.8$	&5	&0.313	&0.263	&0.305	&0.362	&0.323	\tabularnewline	%
$\beta=0$	&10	&0.235	&0.188	&0.247	&0.319	&0.214	\tabularnewline	%
$\gamma=1$	&15	&0.228	&0.145	&0.193	&0.296	&0.155	\tabularnewline	%
$\delta=0$	&20	&0.230	&0.115	&0.142	&0.275	&0.116	\tabularnewline	%
	&25	&0.246	&0.092	&0.094	&0.282	&0.095	\tabularnewline	\hline%

\end{tabular}
%\label{}
\end{table}

%%%%%%%%%%%%%%%%%%%%%%%%%%n75 non-causal AR(2) fitted as non-causal AR(1)
%%%%%%%%%%%%%%%%%n50 AR(2) size

\begin{table}[tbp]
\caption{\small{Empirical powers for non-causal AR(2) model fitted as non-causal AR(1), n=75}} \vspace{9pt}
\centering %
\begin{tabular}{cc|ccccc}
\hline 
\multicolumn{2}{c|}{AR(2)} & 
\multicolumn{5}{c}{} \tabularnewline 
$\phi_1=-1.2$ & $\phi_2=1.6$ & $Q_{\ell b}$ & $Q_{mt}$ & $Q_{gv}$& $Q_{w b}$ & $Q_{rk}$  \tabularnewline 
\hline %
\hline %

$\alpha=0.8$ &5		&0.909	&0.851	&0.933	&0.948	&0.99	\tabularnewline	%
$\beta=0$	   &10	&0.692	&0.591	&0.853	&0.905	&0.973	\tabularnewline	%
$\gamma=1$	 &15	&0.601	&0.447	&0.723	&0.801	&0.954	\tabularnewline	%
$\delta=0$	 &20	&0.554	&0.351	&0.617	&0.727	&0.935	\tabularnewline	%
	           &25	&0.492	&0.296	&0.547	&0.687	&0.900	\tabularnewline	\hline%

$\alpha=1.2$ &5		&0.831	&0.791	&0.868	&0.873	&0.911	\tabularnewline	%
$\beta=0$	   &10	&0.678	&0.583	&0.783	&0.833	&0.829	\tabularnewline	%
$\gamma=1$	 &15	&0.598	&0.470	&0.694	&0.776	&0.770	\tabularnewline	%
$\delta=0$	 &20	&0.568	&0.377	&0.608	&0.732	&0.729	\tabularnewline	%
	           &25	&0.533	&0.312	&0.537	&0.694	&0.673	\tabularnewline	\hline%

$\alpha=1.5$ &5		&0.727	&0.692	&0.744	&0.772	&0.785	\tabularnewline	%
$\beta=0$	   &10	&0.606	&0.528	&0.689	&0.742	&0.657	\tabularnewline	%
$\gamma=1$	 &15	&0.537	&0.422	&0.617	&0.687	&0.580	\tabularnewline	%
$\delta=0$	 &20	&0.506	&0.344	&0.541	&0.658	&0.522	\tabularnewline	%
	           &25	&0.492	&0.277	&0.469	&0.635	&0.464	\tabularnewline	\hline%

$\alpha=1.8$ &5		&0.554	&0.519	&0.586	&0.618	&0.628	\tabularnewline	%
$\beta=0$	   &10	&0.470	&0.420	&0.521	&0.586	&0.485	\tabularnewline	%
$\gamma=1$	 &15	&0.421	&0.339	&0.470	&0.545	&0.394	\tabularnewline	%
$\delta=0$	 &20	&0.413	&0.285	&0.408	&0.522	&0.343	\tabularnewline	%
	           &25	&0.389	&0.240	&0.343	&0.507	&0.288	\tabularnewline	\hline%
          
\end{tabular}
%\label{}
\end{table}

\section{Appendix}
To prove Theorem 1, we follow the method used in Lee and Ng (2010). 
Since Proposition $5.2$ is true for the innovation process in general, 
we can use it for free. The key is to establish the remaining technical 
lemmas in their paper for the non-causal model. In the following, 
Proposition $1$ and $2$ are corresponding to Proposition 
$5.1$ and $5.3$ of Lee and Ng respectively. 

Let $\varphi_t=Z_t-\hat{Z}_t$, for $t=1, \cdots, n$. From (\ref{e:solu}) we get
\begin{equation}
\varphi_t=\displaystyle\sum\limits_{j=1}^{p}(\phi_j-\hat{\phi}_j)Y_{t-j}=\displaystyle\sum\limits_{j=1}^{p}(\phi_j-\hat{\phi}_j)\displaystyle\sum\limits_{k=0}^{\infty}\psi_kZ_{t-j-k}+\displaystyle\sum\limits_{j=1}^{p}(\phi_j-\hat{\phi}_j)\displaystyle\sum\limits_{k=1}^{\infty}\bar{\psi}_k Z_{t-j+k}
\label{e:varp}
\end{equation}
  
\noindent By changing the order of summation

\begin{eqnarray}
|\displaystyle\sum\limits_{j=1}^{p}(\phi_j-\hat{\phi}_j)\displaystyle\sum\limits_{k=0}^{\infty}\psi_kZ_{t-j-k}|
&\leq& |\displaystyle\sum\limits_{j=1}^{\infty}\displaystyle\sum\limits_{k=1}^{\mathrm{min}(j, p)}(\phi_k-\hat{\phi}_k)\psi_{j-k} Z_{t-j}|\nonumber\\
&\leq &\displaystyle\sum\limits_{j=1}^{\infty}\displaystyle\sum\limits_{k=1}^{\mathrm{min}(j, p)}\left\|\phi-\hat{\phi}\right\||\psi_{j-k}||Z_{t-j}|, 
\label{e:fst}
\end{eqnarray}

\noindent and
\begin{flalign}
|\displaystyle\sum\limits_{j=1}^{p}&(\phi_j-\hat{\phi}_j)\displaystyle\sum\limits_{k=1}^{\infty}\bar{\psi}_k Z_{t-j+k}|\nonumber\\
\leq&\displaystyle\sum\limits_{j=0}^{\infty}|\displaystyle\sum\limits_{k=1}^{p}(\phi_k-\hat{\phi}_k)\bar{\psi}_{j+k}Z_{t+j}|
+|\displaystyle\sum\limits_{j=1}^{p-1}\displaystyle\sum\limits_{k=j+1}^{p}(\phi_k-\hat{\phi}_k)\bar{\psi}_{k-j}Z_{t-j}|\nonumber\\
&\leq \displaystyle\sum\limits_{j=0}^{\infty}\displaystyle\sum\limits_{k=1}^{p}\left\|\phi-\hat{\phi}\right\||\bar{\psi}_{j+k}Z_{t+j}|
+\displaystyle\sum\limits_{j=1}^{p-1}\displaystyle\sum\limits_{k=j+1}^{p}\left\|\phi-\hat{\phi}\right\| |\bar{\psi}_{k-j}Z_{t-j}|,
\label{e:ftt}
\end{flalign}

\noindent where $\left\|\phi-\hat{\phi}\right\|$ is the Euclidean distance of $\phi$ and $\hat{\phi}$. 
By Andrews and Davis (2009) the MLE estimator of the AR polynomial coefficients,  $\hat{\phi}$, converges to 
some random variable in distribution $n^{1/\alpha}(\hat{\phi}-\phi)\stackrel{D}{\rightarrow}S$. By our assumption
that $0<\alpha<2$, we can find a $\delta$ with $2\alpha/(\alpha+2)<\delta<min\{\alpha, 1\}$ such that 
$n^{-\alpha}=o(n^{-1/\delta+1/2})$.  
Note that for any given $\epsilon>0$ there always exists a 
$\gamma_1>0$ such that $\mathrm{P}(|S|>\gamma_1)<\epsilon/2$. If 
we define define $A_n=\left\{\left\|\phi-\hat{\phi}\right\|<\gamma_1 n^{-1/\alpha}\right\}$
then there exists  $N>0$ such that
$P(A_n)>1-\epsilon$ whenever $n>N_1$.  
Under the condition of $A_n$, we can obtain an upper bound for  (\ref{e:fst})  
\begin{equation}
|\displaystyle\sum\limits_{j=1}^{p}(\phi_j-\hat{\phi}_j)\displaystyle\sum\limits_{j=0}^{\infty}\psi_kZ_{t-k}|
\leq  \gamma_1 n^{-1/\alpha}  \displaystyle\sum\limits_{j=1}^{\infty}
\displaystyle\sum\limits_{k=1}^{\mathrm{min}(j,p)}|\psi_{j-k}|
|Z_{t-j}|,
\label{e:atst}
\end{equation}

\noindent and an upper bound for  (\ref{e:ftt})

\begin{equation}
|\displaystyle\sum\limits_{j=1}^{p}
(\phi_j-\hat{\phi}_j)
\displaystyle\sum\limits_{k=1}^{\infty}\bar{\psi}_k Z_{t+k}|
\leq\gamma_1 n^{-1/\alpha}  \displaystyle\sum\limits_{j=0}^{\infty}\displaystyle\sum\limits_{k=1}^{p}|\bar{\psi}_{j+k}||Z_{t+j}|
+\gamma_1 n^{-1/\alpha}\displaystyle\sum\limits_{j=1}^{p-1}\displaystyle\sum\limits_{k=j+1}^{p} |\bar{\psi}_{k-j}||Z_{t-j}|.
\label{e:att}
\end{equation} 

Let $\psi_{j}^*=\displaystyle\sum\limits_{k=1}^{\mathrm{min}(j,p)}|\psi_{j-k}|$,
$\bar{\psi}_{j}^*=\displaystyle\sum\limits_{k=1}^{p}|\bar{\psi}_{j+k}|$, 
and $\bar{\psi}_{1,\cdots,p}=\displaystyle\sum\limits_{k=1}^{p}|\bar{\psi}_k|$.

Assuming $A_n$ is true, an upper bound for $|\varphi_t|$ is given by 

\begin{equation}
|\varphi_t|\leq \gamma_1 n^{-1/\alpha}  \displaystyle\sum\limits_{j=1}^{\infty}\psi_{j}^*|Z_{t-j}|
+\gamma_1 n^{-1/\alpha}  \displaystyle\sum\limits_{j=0}^{\infty}\bar{\psi}_{j}^*|Z_{t+j}|
+\gamma_1 n^{-1/\alpha}\displaystyle\sum\limits_{j=1}^{p-1}
\bar{\psi}_{1,\cdots,p}
|Z_{t-j}|.
\label{e:bdd}
\end{equation}

\begin{proposition}\label{proposition1}
For (\ref{e:bdd}), the following are true, 
\begin{eqnarray*}
E|\displaystyle\sum\limits_{j=1}^{\infty}\psi_{j}^*|Z_{t-j}||^\delta<\infty,\\
E|\displaystyle\sum\limits_{j=0}^{\infty}\bar{\psi}_{j}^*|Z_{t+j}||^\delta<\infty,\\
\gamma_1^{\delta} n^{-\delta/\alpha}\displaystyle\sum\limits_{t=1}^{n}\{E|  \displaystyle\sum\limits_{j=1}^{\infty}\psi_{j}^*|Z_{t-j}||^{\delta}
+E|\displaystyle\sum\limits_{j=0}^{\infty}\bar{\psi}_{j}^*|Z_{t+j}||^{\delta}\}=o(n). 
\end{eqnarray*}

\end{proposition}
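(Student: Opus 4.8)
The plan is to establish the three assertions in sequence, treating the first two absolute-moment bounds as the substance and deducing the third as an immediate corollary. The two facts that make everything work are that $\delta<\alpha$ forces the stable innovations to have a finite $\delta$-th absolute moment, $E|Z_0|^\delta<\infty$, and that $\delta<1$ licenses the elementary subadditivity inequality $\left(\sum_i a_i\right)^\delta\leq\sum_i a_i^\delta$ for nonnegative $a_i$. These two inputs together absorb the infinite-variance obstruction that otherwise obstructs second-moment arguments.

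First I would dispatch the first inequality. Since every summand is nonnegative, subadditivity of $x\mapsto x^\delta$ together with Tonelli's theorem (justifying the interchange of expectation and infinite sum) gives
\[
E\left|\sum_{j=1}^{\infty}\psi_j^*|Z_{t-j}|\right|^{\delta}
\leq\sum_{j=1}^{\infty}(\psi_j^*)^{\delta}\,E|Z_{t-j}|^{\delta}
=E|Z_0|^{\delta}\sum_{j=1}^{\infty}(\psi_j^*)^{\delta},
\]
where the last equality uses that the $\{Z_t\}$ are identically distributed. It then remains to show $\sum_{j=1}^{\infty}(\psi_j^*)^{\delta}<\infty$. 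Invoking the geometric decay $|\psi_i|<C_1D_1^{|i|}$, for $j\geq p$ one has $\psi_j^*=\sum_{k=1}^{p}|\psi_{j-k}|\leq C_1\sum_{k=1}^{p}D_1^{j-k}\leq C'D_1^{j}$ for a suitable constant $C'$, while the finitely many terms with $j<p$ are bounded; hence $\sum_j(\psi_j^*)^{\delta}$ is dominated by a convergent geometric series of ratio $D_1^{\delta}<1$. The second inequality is entirely parallel: writing $\bar{\psi}_j^*=\sum_{k=1}^{p}|\psi_{-(j+k)}|\leq C_1\sum_{k=1}^{p}D_1^{j+k}\leq C''D_1^{j}$ yields $\sum_j(\bar{\psi}_j^*)^{\delta}<\infty$ by the same estimate. (As a byproduct, finiteness of these $\delta$-moments shows the defining series converge almost surely, so the random variables are well defined.)

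For the third assertion I would observe that by stationarity the two expectations appearing inside the sum are independent of $t$; denote their finite values, supplied by the first two parts, by $M_1$ and $M_2$. The sum over $t$ then contributes a plain factor of $n$, so that
\[
\gamma_1^{\delta}n^{-\delta/\alpha}\sum_{t=1}^{n}\{M_1+M_2\}
=\gamma_1^{\delta}(M_1+M_2)\,n^{1-\delta/\alpha}.
\]
Because $\delta/\alpha>0$, dividing by $n$ leaves $n^{-\delta/\alpha}\to 0$, which is precisely the $o(n)$ claim.

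I do not expect a genuine obstacle, since the heavy-tail difficulty is entirely neutralized by choosing the trimming exponent $\delta$ strictly below $\alpha$. The only place demanding a little care is the geometric-decay bookkeeping for $\psi_j^*$ and $\bar{\psi}_j^*$ — in particular tracking the constants and treating the initial terms $j<p$ separately from the geometric tail — but this is routine rather than subtle.
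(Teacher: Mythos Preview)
Your proposal is correct and follows essentially the same approach as the paper: both arguments rest on the subadditivity $(\sum a_i)^\delta\le\sum a_i^\delta$ for $\delta<1$, the finiteness of $E|Z_0|^\delta$ since $\delta<\alpha$, and the geometric decay of the coefficients to ensure $\sum_j(\psi_j^*)^\delta<\infty$ and $\sum_j(\bar{\psi}_j^*)^\delta<\infty$. The only cosmetic difference is that the paper applies subadditivity once more inside the definition of $\psi_j^*$ and reorders the resulting double sum to reduce to $p\sum_j|\psi_j|^\delta$, whereas you bound $\psi_j^*$ directly by a geometric term; your treatment of the third assertion via stationarity is also more explicit than the paper's, which leaves it implicit.
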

\begin{proof}

The coefficients $\{\psi_j\}$ and $\{\bar{\psi_j}\}$ are geometrically decaying as $j\rightarrow\infty$. 
As a result, $\displaystyle\sum\limits_{j=1}^{\infty}|\psi_{j}|^{\delta}<\infty$
and $\displaystyle\sum\limits_{j=1}^{\infty}|\psi_{j}^*|^{\delta}<\infty$.

Change the order of summation and apply the triangle inequality, then we have

\[
\displaystyle\sum\limits_{j=1}^{\infty}|\psi_{j}^*|^{\delta}
\le \displaystyle\sum\limits_{j=1}^{\infty} \displaystyle\sum\limits_{k=1}^{\mathrm{min}(j,p)}|\psi_{j-k}|^{\delta}
=p\displaystyle\sum\limits_{j=0}^{\infty}|\psi_{j}|^{\delta}<\infty,
\]

\noindent and

\[
\displaystyle\sum\limits_{j=0}^{\infty}|\bar{\psi_{j}}^*|^{\delta}
\le \displaystyle\sum\limits_{j=0}^{\infty} \displaystyle\sum\limits_{k=1}^{p}|\bar{\psi}_{j+k}|^{\delta}
\leq p \displaystyle\sum\limits_{j=1}^{\infty}|\bar{\psi_{j}}|^{\delta}<\infty.
\]

Also by the triangle inequality (for example, page 537, Brockwell and Davis, 1991) and 
$E|Z_{t-j}|^{\delta}<\infty$
\begin{eqnarray*}
E|\displaystyle\sum\limits_{j=1}^{\infty}\psi_{j}^*|Z_{t-j}||^\delta
&\leq& E|\displaystyle\sum\limits_{j=1}^{\infty}|\psi_{j}^*|^{\delta}||Z_{t-j}|^{\delta}
<\infty,\\
E|\displaystyle\sum\limits_{j=0}^{\infty}\bar{\psi}_{j}^*|Z_{t+j}||^\delta
&\leq& E|\displaystyle\sum\limits_{j=0}^{\infty}|\bar{\psi}_{j}^*|^{\delta}||Z_{t+j}|^{\delta}
<\infty.\\
\end{eqnarray*}
\end{proof}

Given a fixed number $0<\lambda<1$ and $\beta_n$ a predetermined sequence of real numbers,
let $\chi_t=Z_t-Z_{([n\lambda])}-\beta_n$.
The Proposition 5.3. of Lee and Ng (2010) is also true for the non-causal AR sequences. 

\begin{proposition}\label{proposition2}
For any $\gamma_2>0$, 
\[
\mathrm{P}\{n^{-1/2}\displaystyle\sum\limits_{t=1}^{n}1_{(|\varphi_t|>|\chi_t|)}1_{A_n}>\gamma_2\}\rightarrow 0.
\]
\end{proposition}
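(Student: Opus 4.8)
The plan is to reduce the statement to a first–moment (Markov) estimate and then to control that moment by combining the pathwise bound (\ref{e:bdd}) with the finite $\delta$-th moments of Proposition \ref{proposition1}. By Markov's inequality,
\[
\mathrm{P}\Big\{n^{-1/2}\sum_{t=1}^{n}1_{(|\varphi_t|>|\chi_t|)}1_{A_n}>\gamma_2\Big\}
\le \frac{1}{\gamma_2\sqrt{n}}\sum_{t=1}^{n}\mathrm{P}\{|\varphi_t|>|\chi_t|,\,A_n\},
\]
so it suffices to show $\sum_{t=1}^{n}\mathrm{P}\{|\varphi_t|>|\chi_t|,A_n\}=o(\sqrt{n})$. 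On $A_n$ the bound (\ref{e:bdd}) gives $|\varphi_t|\le \gamma_1 n^{-1/\alpha}W_t$, where $W_t:=\sum_{j\ge1}\psi_j^*|Z_{t-j}|+\sum_{j\ge0}\bar{\psi}_j^*|Z_{t+j}|+\sum_{j=1}^{p-1}\bar{\psi}_{1,\cdots,p}|Z_{t-j}|\ge0$. Hence $\{|\varphi_t|>|\chi_t|\}\cap A_n\subseteq\{|Z_t-Z_{([n\lambda])}-\beta_n|<\gamma_1 n^{-1/\alpha}W_t\}$, and the task becomes to estimate the probability that $Z_t$ falls in a window of width of order $n^{-1/\alpha}$ about the empirical trimming threshold.

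The first difficulty is that both the centre $Z_{([n\lambda])}$ and the half-width $W_t$ of this window depend on $Z_t$ itself, so one cannot simply bound the window probability by (density)$\times$(width) and integrate. I would handle this by conditioning on $\mathcal{F}_{-t}:=\sigma(Z_s:s\neq t)$. Given $\mathcal{F}_{-t}$, the variable $Z_t$ has the bounded stable density $f$, and the map $x\mapsto Z_{([n\lambda])}$ (evaluated with $Z_t=x$) is nondecreasing and $1$-Lipschitz: it equals $x$ exactly on the spacing interval $I_t=(V_{(m-1)},V_{(m)})$ determined by the order statistics of $\{Z_s:s\neq t\}$ adjacent to rank $m=[n\lambda]$, and is constant off $I_t$. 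Consequently $g(x):=x-Z_{([n\lambda])}(x)-\beta_n$ has slope one off $I_t$ and is flat on $I_t$. Splitting the window event accordingly, the contribution of $\{Z_t\in I_t\}$ is at most $\|f\|_\infty\,E(V_{(m)}-V_{(m-1)})$; since the expected central spacing is $O(1/n)$, these terms sum to $O(1)=o(\sqrt n)$. Off $I_t$ the map is expanding with slope one, so the window event confines $Z_t$ to an interval of length comparable to the half-width, and the residual dependence of that half-width on $Z_t$ enters only through the single term $\bar{\psi}_0^*|Z_t|$ with coefficient $O(n^{-1/\alpha})$, a lower-order perturbation absorbed into a constant.

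It is essential to treat the order statistic exactly rather than to replace it by the true quantile with an additive error: the sample-quantile error is of order $n^{-1/2}$, which for $\alpha<2$ is \emph{larger} than the window scale $n^{-1/\alpha}$, so a crude replacement would destroy the estimate. The second difficulty is that $E W_t=\infty$ when $\alpha\le1$, so one cannot integrate the width directly. I would resolve this with the elementary inequality $\min(1,x)\le x^{\delta}$ for $x\ge0$ and $0<\delta\le1$: since a probability never exceeds one, the slope-one contribution satisfies
\[
\mathrm{P}\{|g(Z_t)|<\gamma_1 n^{-1/\alpha}W_t\mid\mathcal{F}_{-t}\}
\le \min\!\big(1,\,C n^{-1/\alpha}W_t\big)
\le C^{\delta} n^{-\delta/\alpha}W_t^{\delta},
\]
for a constant $C$ depending on $\|f\|_\infty$ and $\gamma_1$, where the half-width has been majorised by its $\mathcal{F}_{-t}$-measurable version carrying the same finite $\delta$-th moment. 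Taking expectations and invoking $E W_t^{\delta}<\infty$ from Proposition \ref{proposition1} yields $\mathrm{P}\{|\varphi_t|>|\chi_t|,A_n\}\le C' n^{-\delta/\alpha}$ uniformly in $t$.

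Summing, $\sum_{t=1}^{n}\mathrm{P}\{|\varphi_t|>|\chi_t|,A_n\}\le C' n^{1-\delta/\alpha}$, so the Markov bound is $O(n^{1/2-\delta/\alpha})$. The admissible range $2\alpha/(\alpha+2)<\delta$ forces $\delta>\alpha/2$, because $2\alpha/(\alpha+2)>\alpha/2$ for every $\alpha<2$; therefore $\delta/\alpha>1/2$ and $n^{1/2-\delta/\alpha}\to0$, which proves the claim. The main obstacle is the joint treatment of the two coupled difficulties — the $Z_t$-dependence of the trimming window, which forces the exact conditioning-plus-Lipschitz argument rather than a quantile approximation, and the absence of a finite first moment of $W_t$, which forces the sub-linear $\min(1,x)\le x^{\delta}$ device together with Proposition \ref{proposition1}; the remaining exponent bookkeeping is routine.
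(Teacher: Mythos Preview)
Your reduction by Markov's inequality and the split according to whether $t=s([n\lambda])$ are exactly the paper's first two moves; the paper records the latter as the explicit $1/n$ term in its conditional bound. Where you diverge is in the core estimate: the paper applies the Markov-type inequality $\mathrm{P}(|\varphi_t|>|\chi_t|)\le E[|\varphi_t|^{\delta}|\chi_t|^{-\delta}]$ on the event $t\neq s([n\lambda])$, having first localised to $\{|Z_{s([n\lambda])}|<\gamma_3\}$, and then calls on Lee and Ng's (5.22)--(5.23) to bound the conditional negative moments $E\{|\chi_t|^{-\delta}\mid s([n\lambda])\neq t\}$ and $E\{|Z_{t-j}|^{\delta}|\chi_t|^{-\delta}\mid s([n\lambda])\neq t,t-j\}$. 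Your route---conditioning on $\mathcal{F}_{-t}$, exploiting the $1$-Lipschitz structure of $x\mapsto Z_{([n\lambda])}(x)$, bounding the window probability by (bounded stable density)$\times$(width), and then applying $\min(1,x)\le x^{\delta}$---is a self-contained re-derivation of the same estimate, trading the citation of (5.22)--(5.23) for a direct density argument. The final exponent arithmetic $n^{1/2-\delta/\alpha}\to 0$ coincides with the paper's use of $n^{-1/\alpha}=o(n^{-1/\delta+1/2})$.

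There is one place where your sketch is thinner than the paper's argument. The half-width $W_t$ contains the term $\bar{\psi}_0^{*}|Z_t|$, so conditionally on $\mathcal{F}_{-t}$ the window still depends on the variable you are integrating. You dismiss this as ``absorbed into a constant''; carrying it through, for $\epsilon_n:=\gamma_1 n^{-1/\alpha}\bar{\psi}_0^{*}<1/2$ the event implies $|Z_t-c-\beta_n|<2\gamma_1 n^{-1/\alpha}\bigl(W_t'+\bar{\psi}_0^{*}|c+\beta_n|\bigr)$, where $c$ is the adjacent order statistic of $\{Z_s:s\neq t\}$ and $W_t'$ is the $\mathcal{F}_{-t}$-measurable part of $W_t$. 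You then need $E|c+\beta_n|^{\delta}$ bounded uniformly in $n$. That is true for a central sample quantile, but the cleanest way to secure it is precisely the paper's localisation to $\{|Z_{s([n\lambda])}|<\gamma_3\}$, which replaces this moment by the deterministic bound $\gamma_3^{\delta}$ at the cost of an arbitrarily small probability. With that localisation inserted, your argument is complete and equivalent in strength to the paper's.
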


\begin{proof}
As in Lee and Ng (2010), we can pick a constant $\gamma_3>0$ such that 
$\mathrm{P}(|Z_{s[n\lambda]}|>\gamma_3)$ is arbitrarily small in which 
$s(k)=j$ if $Z_j$ is the $k^{th}$ largest number among $\{Z_1,\ldots, Z_n\}$. 
To show the 
result it is sufficient to get   
\[
\displaystyle\sum\limits_{t=1}^{n}\mathrm{P}\{(|\varphi_t|>|\chi_t|)\cap{A_n}\cap(|Z_{s[n\lambda]}|<\gamma_3)\}=o(n^{1/2}).
\]

By Lee and Ng (2010), for any $t\in\{1, \ldots, n\}$,  
\[
\mathrm{P}\{(|\varphi_t|>|\chi_t|)\cap{A_n}\cap(|Z_{s[n\lambda]}|<\gamma_3)\}\leq
\frac{1}{n}+\frac{n-1}{n}E\{|\varphi_t|^{\delta}|\chi_t|^{-\delta}1_{(|Z_{s[n\lambda]}|<\gamma_3)}1_{A_n}\bigg |t\ne s([n\lambda])\}.
\]

Use triangle inequality and (\ref{e:bdd})
\begin{flalign}
\displaystyle\sum\limits_{t=1}^{n} E\{|\varphi_t|^{\delta}&|\chi_t|^{-\delta}1_{(|Z_{s[n\lambda]}|<\gamma_3)}
1_{A_n} \bigg| t\ne s([n\lambda])\}\nonumber\\
\leq 
\displaystyle\sum\limits_{t=1}^{n}
E\{&\gamma_1^{\delta} n^{-\delta/\alpha} ( \displaystyle\sum\limits_{j=1}^{\infty}\psi_{j}^*|Z_{t-j}|)^{\delta}
|\chi_t|^{-\delta}1_{(|Z_{s[n\lambda]}|<\gamma_3)}\bigg| s([n\lambda])\ne t \}+\label{e:iqt1}\\
\displaystyle\sum\limits_{t=1}^{n}
E&\{\gamma_1^\delta n^{-\delta/\alpha}  (\displaystyle\sum\limits_{j=0}^{\infty}\bar{\psi}_{j}^*|Z_{t+j}|)^{\delta}
|\chi_t|^{-\delta}1_{(|Z_{s[n\lambda]}|<\gamma_3)}\bigg| s([n\lambda])\ne t \}+\label{e:iqt2}\\
\displaystyle\sum\limits_{t=1}^{n}
&E\{\gamma_1^\delta n^{-\delta/\alpha} (\displaystyle\sum\limits_{j=1}^{p-1}\bar{\psi}_{1,\cdots,p}|Z_{t-j}|)^\delta
|\chi_t|^{-\delta}1_{(|Z_{s[n\lambda]}|<\gamma_3)}\bigg| s([n\lambda])\ne t \}.\label{e:iqt3}
\end{flalign}

To finish the proof, in the next we will show (\ref{e:iqt1}), (\ref{e:iqt2}), and (\ref{e:iqt3}) are $o(n^{1/2})$.

Conditional on $s([n\lambda])=t-j$ and $s([n\lambda])\neq t-j$, 
(\ref{e:iqt1}) is bounded above by 
\begin{flalign}
\displaystyle\sum\limits_{t=1}^{n}
\gamma_1^{\delta}\gamma_3^{\delta} & n^{-\delta/\alpha}\displaystyle\sum\limits_{j=1}^{\infty}|\psi_{j}^*|^{\delta}
E\{
|\chi_t|^{-\delta}\bigg| s([n\lambda])\ne t \}+\label{e:fbf1}\\
\displaystyle\sum\limits_{t=1}^{n}
& E\{\gamma_1^{\delta} n^{-\delta/\alpha} \displaystyle\sum\limits_{j=1}^{\infty}|\psi_{j}^*|^\delta |Z_{t-j}|^{\delta}
|\chi_t|^{-\delta}1_{(s[n\lambda]\neq t-j)}\bigg| s([n\lambda])\ne t \}.
\label{e:fbf2}
\end{flalign}

\noindent
We apply  Proposition $1$, (5.23) in Proposition 5.3 of Lee and Ng (2010), and the fact that $n^{-1/\alpha}=o(n^{-1/\delta+1/2})$ 
to (\ref{e:fbf1}) and get  
\[\displaystyle\sum\limits_{t=1}^{n}\gamma_1^{\delta}\gamma_3^{\delta}  n^{-\delta/\alpha}\displaystyle\sum\limits_{j=1}^{\infty}|\psi_{j}^*|^{\delta}
E\{
|\chi_t|^{-\delta}\bigg| s([n\lambda])\ne t \}=o(n^{1/2}).\]

\noindent For (\ref{e:fbf2}), 
two cases, $1\leq j \leq t-1$ and $j\geq t$, are considered respectively. 
When $1\leq j \leq t-1$, 
\begin{flalign*}
\displaystyle\sum\limits_{t=1}^{n}
E\{\gamma_1^{\delta} &n^{-\delta/\alpha} \displaystyle\sum\limits_{j=1}^{t-1}|\psi_{j}^*|^\delta |Z_{t-j}|^{\delta}
|\chi_t|^{-\delta}1_{(s[n\lambda]\neq t-j)}\bigg| s([n\lambda])\ne t \}\leq \\
&\displaystyle\sum\limits_{t=1}^{n}\gamma_1^{\delta} n^{-\delta/\alpha}\frac{n-2}{n-1}
E\{ \displaystyle\sum\limits_{j=1}^{t-1}|\psi_{j}^*|^\delta |Z_{t-j}|^{\delta}
|\chi_t|^{-\delta} \bigg| s([n\lambda])\ne t, t-j \}=o\left(n^{1/2}\right),
\end{flalign*}

\noindent since by (5.22) of Lee and Ng (2010) $E\{ \displaystyle\sum\limits_{j=1}^{t-1}|\psi_{j}^*|^{\delta}|Z_{t-j}|^{\delta}|\chi_t|^{-\delta}\bigg| s([n\lambda])\ne t,t-j \}=O(1)$.
When $j\geq t$, $Z_{t-j}$ is in the set $\{Z_{0}, Z_{-1}, \ldots\}$. 
Hence,  $Z_{t-j}$ is independent of $s([n\lambda])$,
which implies that 
\[E\{| \psi_{j}^*|^{\delta}|Z_{t-j}|^{\delta}|\chi_t|^{-\delta}1_{(s[n\lambda]\neq t-j)}\bigg| s([n\lambda])\ne t\}=|\psi_{j}^*|^{\delta}
 E\{|Z_{t-j}|^{\delta}\}E\{|\chi_t|^{-\delta}1_{(s[n\lambda]\neq t-j)}\bigg| s([n\lambda])\ne t \}.\]
\noindent
Now use Proposition 1 and (5.23) of Lee and Ng again
\begin{flalign*}
\displaystyle\sum\limits_{t=1}^{n}
E\{\gamma_1^{\delta} &n^{-\delta/\alpha} \displaystyle\sum\limits_{j=t}^{\infty}|\psi_{j}^*|^\delta |Z_{t-j}|^{\delta}
|\chi_t|^{-\delta}1_{(s[n\lambda]\neq t-j)}\bigg| s([n\lambda])\ne t \}\leq\\
&\displaystyle\sum\limits_{t=1}^{n}\gamma_1^{\delta} n^{-\delta/\alpha}\frac{n-1}{n}
\displaystyle\sum\limits_{j=t}^{\infty}|\psi_{j}^*|^\delta E\{|Z_{t-j}|^{\delta}\}
E\{|\chi_t|^{-\delta} 1_{(s[n\lambda]\neq t-j)} \bigg| s([n\lambda])\ne t\}=o\left(n^{1/2}\right).
\end{flalign*}

\noindent Therefore, (\ref{e:iqt1}) is $o(n^{1/2})$ .
In the same way, the result also holds for (\ref{e:iqt2}) and (\ref{e:iqt3}).

\end{proof}

\begin{proof} of Theorem \ref{thm:acfDistro}.

Let $q^L$ and $q^U$ be the $(\lambda^L)$-th and $(\lambda^U)$-th 
quantiles of $Z_t$. 
Denote the mean and standard deviation of the trimmed 
random variable $Z_tI(q^L<Z_t<q^U)$ by $\mu$ and $\sigma$, 
\[
\mu=E[Z_tI(q^L<Z_t<q^U)] \quad\textrm{and }\quad\sigma^2=Var[Z_tI(q^L<Z_t<q^U)].
\]
\noindent Let $Z_t^\mu=Z_tI_t-\mu$, then directly from Lemma $4.1$ in Lee and Ng~(2010), 
\begin{flalign}
&n^{-1/2}\left\{\displaystyle\sum\limits_{t=k+1}^{n}Z_t^\mu Z_{t-k}^\mu\right\}_{k=1,2,\ldots,m}
\stackrel{D}{\rightarrow}N(0, \sigma^4I_m),\nonumber\\
&n^{-1/2}\displaystyle\sum\limits_{t=1}^{n}Z_t^\mu
\stackrel{D}{\rightarrow}N(0, \kappa^2),\nonumber\\
&n^{-1}\displaystyle\sum\limits_{t=1}^{n}(Z_t^\mu)^2
\stackrel{p}{\rightarrow}\sigma^2,
\label{e:ori}
\end{flalign}

\noindent with $\kappa$ being certain constant associated with the distribution of $Z_t$,
and $(q^L, q^U)$.  

Now let
$M_n^L$ and $M_n^U$ be the $(n\lambda^L)$-th and $(n\lambda^U)$-th order statistics of
$\{Z_t\}_{t=1}^n$ and define $\hat{Z}_t^\mu=Z_tI_t-\mu$ and
\[
I_t=
\left\{ 
  \begin{array}{l l}
    1, &\quad \textrm{if $ M_n^L<Z_t <M_n^U$},\\
    0, & \quad \textrm{otherwise,}\\
   \end{array} \right. \quad\quad\
\hat{I_t}=
\left\{ 
  \begin{array}{l l}
    1, &\quad \textrm{if $\hat{M}_n^L<\hat{Z}_t <\hat{M}_n^U$},\\
    0, & \quad \textrm{otherwise.}\\
  \end{array} \right.   
\]

\noindent It follows from Proposition (\ref{proposition1}),
Proposition (\ref{proposition2}), and the proof of Lemma 4.2 in Lee and Ng~(2010) 
that 

\begin{flalign}
&n^{-1/2}\sum\limits_{t=k+1}^{n}|Z_t^{\mu}Z_{t-k}^{\mu}-\hat{Z}_t^{\mu}\hat{Z}_{t-k}^{\mu}|\stackrel{p}{\rightarrow}0, \quad \textrm{for $k=1,2,\ldots,m$,} \nonumber\\
&n^{-1/2}\sum\limits_{t=1}^{n}|Z_tI_t-\hat{Z}_t\hat{I}_{t}|\stackrel{p}{\rightarrow}0,\nonumber\\
&n^{-1}\sum\limits_{t=1}^{n}|(Z_t^{\mu})^2-(\hat{Z}_t^{\mu})^2|\stackrel{p}{\rightarrow}0.
\label{e:orihat}
\end{flalign}

Now note that  

\[
\sqrt{n-k}\hat{\rho}_k=\frac{\frac{1}{\sqrt{n-k}}\displaystyle\sum\limits_{t=k+1}^{n}\hat{Z}_t^\mu \hat{Z}_{t-k}^\mu-\frac{1}{\sqrt{n-k}}\displaystyle\sum\limits_{t=k+1}^{n}\hat{Z}_t^\mu
\frac{1}{n-k}\displaystyle\sum\limits_{t=k+1}^{n}\hat{Z}_{t-k}^\mu}
{\frac{1}{n}
\displaystyle\sum\limits_{t=1}^{n}(\hat{Z}_t^\mu)^2-\frac{1}{n^2}
(
\displaystyle\sum\limits_{t=1}^{n}\hat{Z}_t^\mu 
)^2
}.
\]

\noindent Combining (\ref{e:ori}) and (\ref{e:orihat}) yields the 
result. 

\end{proof}

\begin{proof} of Theorem \ref{thm:pacfDistro}.

By Theorem \ref{thm:acfDistro} and equation (\ref{e:par}).

\end{proof}

\begin{proof}of Theorem \ref{thm:rankcor}

Define the empirical copula of the residuals be defines as 
\[
C_{m,n}(u_1, \ldots, u_m)=\frac{1}{\sqrt{n-m+1}}\sum_{i=1}^{n-m+1}\left[\prod_{j=1}^{m}\mathrm{I}(\tilde{r}_{i+j-1}\leq u_j)-\prod_{j=1}^{m}u_j\right],
\]

\noindent where $(u_1, \ldots, u_m)\in [0,1]^m$. 
Cline and Brockwell (1985) showed 
%Linear prediction of ARMA processes with infinte variance, stochastic process and their applications 19, 281-296

\begin{equation}
\lim_ {t\rightarrow\infty}\frac{P[|Y_1|>t]}{P[|Z_1|>t]}=\sum_{j=-\infty}^{\infty}|\psi_j|^{\alpha}.
\label{e:cline1}
\end{equation}
\noindent As a result
\begin{equation}
\lim_ {t\rightarrow\infty}nP[|Y_1|>a_n t]=\sum_{j=-\infty}^{\infty}|\psi_j|^{\alpha}t^{-\alpha},
\label{e:cline2}
\end{equation}
 
\noindent for all $t>0$, where $a_n=inf\{t: nP[|Z_1|>t]\le1\}$. Then we can 
follow the same lines of Theorem 3.4 and related technical Lemmas
in Bouhaddioui and Ghoudi (2012)  
to prove that the empirical copula of the residuals $C_{m,n}$ converges to a 
continuous process $\tilde{\mathcal{C}}$ if the model 
(\ref{eq:ardef}) is correctly identified 
by MLE method. 
The continuous process $\tilde{\mathcal{C}}$ is the limit of the 
sequential empirical process of a sequence of i.i.d random variables 
identified in Genest and R{\'e}millard (2004), for which there is 
no simple expression. 
However, as in Proposition 2.1 of Genest and R\'{e}millard (2004), 
the M\"{o}bius transformation of $C_{m,n}$, $\mathcal{M}$, leads to some simple results. 
Let $A$ be a subset of $\{1, \ldots, m\}$ with $|A|>1$, the M\"{o}bius 
transformation of $C_{m,n}$ indexed by $A$ is 
\[
\mathcal{M}_A(C_{m,n})=\frac{1}{\sqrt{n-m+1}}\sum_{i=1}^{n-m+1}\prod_{j\in A}\left[\mathrm{I}(\tilde{r}_{i+j-1}\leq u_j)-u_j\right].
\]  
\noindent Then $\mathcal{M}_A(C_{m,n})$ converge jointly to continuous 
centered Gaussian processes $\mathcal{M}_A(\tilde{\mathcal{C}})$ and 
furthermore $\mathcal{M}_A(\tilde{\mathcal{C}})$ and 
$\mathcal{M}_{A^{'}}(\tilde{\mathcal{C}})$ are asymptotically independent 
whenever two sets $A\ne A^{'}$. 
Letting $A=\{1,k+1\}$, then the serial rank correlation $\hat{\gamma}_i$ 
could be derived, as in Bouhaddioui and Ghoudi (2012), 
from the  M\"{o}bius transformation of $C_{m,n}$ through

\begin{eqnarray}
\hat{\gamma}_i&=&\frac{1}{\sqrt{n}}\int\mathcal{M}_A(C_{m,n})d\mathbf{u}\nonumber\\
           &=&\frac{1}{\sqrt{n}}\int_0^1 \int_0^1  C_{m,n}(u_1,1,\ldots,1,u_{k+1}, 1, \ldots, 1)   du_1du_{k+1}\nonumber\\
           &=&\frac{1}{n}\left[\sum_{t=1}^{n-i}(\tilde{r}_t-1/2)(\tilde{r}_{t+k}-1/2) \right].
\label{rankco}             
\end{eqnarray}
\noindent and $\sqrt{n}\hat{\gamma}_i$ is asymptotically normal with mean zero and variance 
$1/12^{2}$. 
The same result carries over to the case of the squared residuals as discussed in Bouhaddioui and Ghoudi (2012). 
\end{proof}

\section*{References}

\noindent{\sc Andrews, B., Calder, M., and Davis, R. A.} (2009)
\newblock Maximum likelihood estimation for $\alpha$-stable autoregressive processes. 
\newblock {\em Ann. Statist.}, 37, 1946-1982.

\noindent{\sc Andrews, B. and Davis, R. A.} (2011) 
\newblock Model identification for infinite variance autoregressive processes.
\newblock{\em Annals of the Journal of Econometrics}.

\noindent{\sc Bouhaddioui, C. and Ghoudi, K.} (2012) 
\newblock Empirical processes for infinite variance autoregressive models. 
\newblock{\em Journal of Multivariate Analysis}, 107, 319-335.

\noindent{\sc Box, G. E. P. and Pierce, D. A.} (1970) 
\newblock Distribution of residual autocorrelations in autoregressive-integrated moving average time series models.
\newblock {\em J. Amer. Statist. Assoc.}, 65, 1509-1526.

\noindent{\sc Breidt, F. J. and Davis, R. A.} (1992) 
\newblock Time-reversibility, identifiability and independence of innovations for stationary time series.
\newblock {\em J. Time Ser. Anal.}, 13, 377-390. 

\noindent{\sc Breidt, F. J., Davis, R. A., Lii, K.-S., and Rosenblatt, M.} (1991) 
\newblock Maximum likelihood estimation for noncausal autoregressive processes.
\newblock{\em J. Multivariate Anal.}, 36, 175-198. 

\noindent {\sc Brockwell, P.~J.~\& Davis, R.~A.} (1991). 
\newblock {\em Time Series: Theory and Methods}, 2nd ed. New York: Springer.

\noindent {Cline, D. B. H. and Brockwell, P.} (1985) 
\newblock Linear prediction of ARMA processes with infinite variance.
\newblock {\em  Stochastic Process and Their Applications}, 19, 281-296. 

\noindent {Fisher, T. J. and Gallagher, C. M.} (2012)  
\newblock New Weighted Portmanteau Statistics for Time
Series Goodness-of-Fit Testing.
\newblock {\em Journal of the American Statistical Association},
accepted (2012).

\noindent {Gallagher, C. M.} (2001) 
\newblock A method for fitting stable autoregressive models using the autocovariation
function.
\newblock {\em Statist. Probab. Lett.}, 53, 381-390. 

\noindent {Genest, C. and R{\'e}millard, B.} (2004)
\newblock Tests of independence and randomness based on the
empirical copula process.
\newblock {\em Test} , 13, 335-370. 

\noindent {Lanne, M., Luoto, J., and Saikkonen, P.} (2010)
\newblock Optimal Forecasting of Noncausal Autoregressive
Time Series. 
\newblock {\em HEER (Helsinki Center of Economic Research)}, 
Discussion Paper No. 286. 

\noindent {Lee, S. and Ng, C. T.} (2010) 
\newblock Trimmed portmanteau test for linear processes with infinite
variance. 
\newblock {\em J. Multivariate Anal.}, 101, 984-998.

\noindent {Lin, J.-W. andMcLeod, A. I.} (2006) 
\newblock Improved Pe{\~n}a-Rodr{\'{\i}}guez portmanteau test. 
\newblock {\em Comput. Statist. Data Anal.}, 51, 1731-1738. 

\noindent {Ling, S.} (2005) 
\newblock Self-weighted least absolute deviation estimation for infinite variance autoregressive
models.
\newblock{\em J. R. Stat. Soc. Ser. B Stat. Methodol.}, 67, 381-393.

\noindent {Ljung, G. M. and Box, G. E. P.} (1978)
\newblock On a measure of lack of fit in time series models.
\newblock{\em Biometrika}, 65, 297-303.

\noindent {Mahdi, E. and McLeod, I. A.} (2012) 
\newblock Improved multivariate portmanteau test.
\newblock{\em Journal of Time Series Analysis}, 33, 211-222. 

\noindent {Monti, A. C.} (1994) 
\newblock A proposal for a residual autocorrelation test in linear models.
\newblock{\em Biometrika}, 81, 776-780. 

\noindent { Pe{\~n}a, D. and Rodr{\'{\i}}guez, J.} (2002)
\newblock A powerful portmanteau test of lack of fit for time series.
\newblock {\em J. Amer. Statist. Assoc.}, 97, 601-610. 

\noindent { Pe{\~n}a, D. and Rodr{\'{\i}}guez, J.} (2006)
\newblock The log of the determinant of the autocorrelation matrix for testing goodness of
fit in time series.
\newblock {\em J. Statist. Plann. Inference}, 136, 2706-2718.

\noindent {Rachev, S., Huber, I., Ortobelli, S., and Stoyanov, S.} (2004) 
\newblock Portfolio choice with heavy tailed distributions. 
\newblock {\em Technical report},\\ 
\newblock {\em URL http://www.pstat.ucsb.edu/research/papers/article3.pdf}

\noindent {Resnick, S. I.} (1997)  
\newblock Heavy tail modeling and teletraffic data.
\newblock {\em  Ann. Statist.}, 25, 1805-1869. 

\noindent {Rosenblatt, M.} (2000)
\newblock {\em Gaussian and non-Gaussian linear time series and random fields.}
New York:Springer-Verlag.

\noindent {Sheng, H. and Chen, Y.} (2011) 
\newblock FARIMA with stable innovations model of Great Salt Lake elevation time series.
\newblock {\em Signal Processing}, 91, 553-561. 

\noindent {Stuck, B. W. and Kleiner, B.} (1974) 
\newblock A statistical analysis of telephone noise.
\newblock {\em The Bell System Technical Journal} , 53, 1263-1320.

\noindent {Tokat, Y. and Schwartz, E. S.} (2002) 
\newblock The impact of fat tailed returns on asset allocation.
\newblock {\em Math. Methods Oper. Res.}, 55, 165-185.

%\begin{proof} of Theorem \ref{thm3}.

%By Theorem \ref{thm2} and multivariate delta method, follow the 
%same line as in the proof of Theorem 1 of Lin and McLeod (2007).
%\end{proof}

%% References
%%
%% Following citation commands can be used in the body text:
%% Usage of \cite is as follows:
%%   \cite{key}         ==>>  [#]
%%   \cite[chap. 2]{key} ==>> [#, chap. 2]
%% 

%\section*{References}
%\label{sec:references}

%% References with bibTeX database:
%\singlespacing
%\bibliographystyle{jasa}
%\bibliography{big_biblio}

\end{document}